\theoremstyle{theorem}
\newtheorem{theorem}{Theorem}
\newtheorem{lemma}{Lemma}
\newcounter{abc}
\newtheorem{thmPrime}[abc]{Theorem}
\theoremstyle{definition}
\theoremstyle{remark}
\title[Local rigidity of convex hypersurfaces]{Local rigidity of convex hypersurfaces in spaces of constant curvature}
\author{Alexander A. Borisenko}
\date{}
\address{B. Verkin Institute for Low Temperature Physics and 
Engineering of the National  Academy of Sciences of Ukraine,
47 Nauky Ave., Kharkiv, 61103, Ukraine;}
\address{Brown University -- ICERM, USA}
\email{aborisenk@gmail.com}
\begin{document}

\begin{abstract}
In this paper, we prove a local rigidity of convex hypersurfaces in the spaces of constant curvature of dimension $n\ge4$. Namely, we show that two convex isometric hypersurfaces are congruent locally around their corresponding under the isometry points of strict convexity. This result extends the result of E.\,P.~Senkin, who showed such rigidity under the additional assumption of $C^1$-smoothness of the hypersurfaces. 

\medskip	

\noindent
\textbf{Keywords:} rigidity, convex hypersurface, space of constant curvature.

\medskip
\noindent
\textbf{MSC2020: } 52A10, 52A55, 51M10, 53C22.
\end{abstract}

\maketitle

\section{Introduction}

In 1950, A.\,V.~Pogorelov proved the unique determination of closed convex surfaces in the Euclidean space $E^3$ \cite{Pog}. For this result, no other conditions on the closed surface except convexity were assumed. Under additional regularity assumptions, the theorem had been proven earlier by S.~Cohn-Vossen in 1924 \cite{CV} and by G.~Herglotz in 1943 \cite{Her}. A.\,V.~Pogorelov also established the unique determination of closed convex surfaces in the spherical space $S^3$. Using the result of Pogorelov and an idea of Senkin, A.\,D.~Milka \cite{Mil} proved the same result for closed convex surfaces in the hyperbolic (Lobachevsky) space $\mathbb H^3$. E.\,P.~Senkin \cite{Sen1}, \cite{Sen3} generalized Pogorelov's theorem to closed convex hypersurfaces in the Euclidean space $E^{n+1}$ ($n \ge 3$) under the additional assumption of $C^1$ regularity of the hypersurface. In \cite{Bor}, this theorem was established without any additional assumptions. 

E.\,P.~Senkin proved the following result on \emph{local} unique determination of $C^1$-smooth convex hypersurfaces:

\begin{theorem}[\cite{Sen1}, \cite{Sen3}]
\label{Thm:Sen2a}
Let $F_1 \subset E^{n+1}$, $n\ge 3$, be a $C^1$-smooth convex hypersurface and $P_1 \in F_1$ be a point of strict convexity. Suppose $F_2 \subset E^{n+1}$ is a $C^1$-smooth convex hypersurface that is isometric to $F_1$, and let $P_2 \in F_2$ be the point corresponding to $P_1$ under this isometry. Then $F_1$ and $F_2$ are congruent in sufficiently small neighborhoods of $P_1$ and $P_2$.   \qed
\end{theorem}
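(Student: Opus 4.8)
The plan is to reduce the local statement to the corresponding \emph{global} rigidity theorem --- the Pogorelov--Senkin theorem that a closed $C^1$-smooth convex hypersurface in $E^{n+1}$, $n\ge 3$, is uniquely determined by its intrinsic metric (\cite{Sen}) --- by extending a small neighbourhood of $P_1$, and the corresponding neighbourhood of $P_2$, to a closed convex hypersurface in a way that is at the same time convex, of class $C^1$, and compatible with the given isometry. This ``closing-up of a cap'' technique goes back to Pogorelov and Senkin.

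First I would pass from the hypersurfaces to \emph{caps}. Strict convexity at $P_1$ provides a supporting hyperplane $\Pi$ of $F_1$ meeting $F_1$, near $P_1$, only at $P_1$; cutting by a hyperplane $\Pi'$ parallel to $\Pi$ and sufficiently close to it yields a convex cap $\bar U_1\subset F_1$ with $P_1$ in its interior and boundary $\Gamma_1=F_1\cap\Pi'$. The hypothesised isometry $\varphi\colon F_1\to F_2$ carries $\bar U_1$ onto a neighbourhood $\bar U_2$ of $P_2$, with $\bar U_1\cong\bar U_2$ intrinsically and with $\varphi$ restricting to an isometry $\Gamma_1\to\Gamma_2:=\partial\bar U_2$. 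Next I would \emph{close up} each cap: attach to $\bar U_i$, along $\Gamma_i$ and on the side away from the cap, an auxiliary convex cap $V_i$ with $\partial V_i=\Gamma_i$, so that $\widehat F_i:=\bar U_i\cup V_i$ is a closed convex hypersurface of class $C^1$. The $C^1$ matching along $\Gamma_i$ forces the tangent hyperplanes of $\bar U_i$ and of $V_i$ to coincide there; this can be arranged because, taking $\Pi'$ close enough to $\Pi$, the cap $\bar U_i$ is as ``flat'' near $\Gamma_i$ as one wishes, so that a suitable spherical or ellipsoidal cap --- or the cap cut from the convex hull of $\bar U_i$ with a far, small ball --- fits onto it $C^1$-smoothly. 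The recipe producing $V_i$ must be chosen to depend on $\bar U_i$ only through data invariant under $\varphi$, so that $V_1\cong V_2$ compatibly with $\varphi|_{\Gamma_1}$; then $\widehat F_1$ and $\widehat F_2$ are isometric closed convex $C^1$ hypersurfaces.

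By the global theorem \cite{Sen}, $\widehat F_1$ and $\widehat F_2$ are congruent; using the standard fact that the congruence may be chosen to realise the given isometry, there is a rigid motion $A$ of $E^{n+1}$ with $A(\widehat F_1)=\widehat F_2$ and $A(P_1)=P_2$. Since $\bar U_i$ occupies an intrinsically recognisable portion of $\widehat F_i$, the map $A$ carries a neighbourhood of $P_1$ in $F_1$ onto a neighbourhood of $P_2$ in $F_2$, which is the assertion.

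The heart of the matter is the closing-up step: the auxiliary cap $V_i$ must meet $\bar U_i$ along $\Gamma_i$ with $C^1$ regularity --- an \emph{extrinsic} condition on the tangent hyperplanes along $\Gamma_i$ --- while depending on $\bar U_i$ only through \emph{intrinsic} data preserved by $\varphi$, so that $\widehat F_1$ and $\widehat F_2$ stay isometric; reconciling these two demands is where both the smallness of the neighbourhood and the $C^1$-regularity of $F_1,F_2$ enter essentially. Without $C^1$-smoothness one would only obtain a closed convex hypersurface with a ridge along $\Gamma_i$, to which the $C^1$ global theorem does not apply --- precisely the gap removed in the present paper by routing the argument through the non-smooth global rigidity results instead. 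Finally, it is worth noting that the statement is a regularity-robust form of classical Beez--Killing rigidity: for $C^2$ strictly convex hypersurfaces with $n\ge 3$ the Gauss equation $R_{ijkl}=h_{ik}h_{jl}-h_{il}h_{jk}$ recovers the second fundamental form (up to sign) algebraically from the curvature tensor, so that the metric already determines the hypersurface up to congruence; Senkin's theorem asserts that this persists under mere $C^1$-regularity, which the cap-extension argument delivers by passing through the global uniqueness theorem.
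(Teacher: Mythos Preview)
Your approach is genuinely different from Senkin's, and the key step is not actually carried out.

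The paper records (and then extends) Senkin's own argument, which proceeds via Pogorelov's \emph{isometric combination}: one positions $F_1$ and $F_2$ with $P_1=P_2=P_0$ so that the hypersurface $\Phi$ given by $r=(r_1+r_2)/2$ is convex near $P_0$; then $\tau=r_1-r_2$ is an infinitesimal bending field of $\Phi$, satisfying $\langle dr,d\tau\rangle=0$ almost everywhere. Cutting off a cap of $\Phi$ by a parallel translate of the supporting hyperplane at $P_0$ (this is where strict convexity enters), Senkin's infinitesimal cap rigidity forces $\tau$ to be trivial off the hyperflat regions, whence $F_1$ and $F_2$ coincide near $P_0$. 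The $C^1$ hypothesis is used \emph{solely} to guarantee that the isometric combination $\Phi$ is locally convex --- and that is precisely the step the present paper replaces by a direct convexity argument for general convex hypersurfaces. There is no cap-closing and no appeal to the global closed-hypersurface theorem.

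Your route tries instead to reduce directly to the global theorem by closing each cap. The gap is the step you yourself flag as ``the heart of the matter'': you need auxiliary pieces $V_1,V_2$ that are simultaneously (i) $C^1$-attached along $\Gamma_i$ --- an \emph{extrinsic} constraint on the tangent hyperplanes of $\bar U_i$ along $\Gamma_i$ --- and (ii) intrinsically isometric via an extension of $\varphi|_{\Gamma_1}$. These requirements pull against each other: the isometry $\varphi$ need not preserve tangent hyperplanes, and in fact $\Gamma_2=\varphi(\Gamma_1)$ need not lie in any hyperplane at all, since nothing in the hypotheses makes $P_2$ a point of strict convexity of $F_2$. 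Your concrete suggestions --- a spherical or ellipsoidal cap, or the convex hull with a far small ball --- are all \emph{extrinsic} constructions, so there is no reason the resulting $V_1$ and $V_2$ should be isometric; conversely, any recipe depending only on intrinsic data will not in general match the prescribed tangent hyperplanes along $\Gamma_i$. You have asserted that this reconciliation can be done, but you have not done it, and it is essentially equivalent in difficulty to the theorem itself.
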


In the proof of this theorem, the assumption of smoothness was used to prove that the isometric combination of $F_1$ and $F_2$ in a neighborhood of $P_1$ and $P_2$ is a convex hypersurface. 

Since we proved this without the smoothness assumption, then Theorem~\ref{Thm:Sen2a} is now proven without this extra assumption.

\begin{thmPrime}
\label{Thm:Prime1}
Let $F_1 \subset E^{n+1}$, $n \ge 3$, be a convex hypersurface and $P_1 \in F_1$ be a point of strict convexity. If $F_2 \subset E^{n+1}$ is a convex hypersurface that is isometric to $F_1$ and $P_2 \in F_2$ is the point corresponding to $P_1$ under the isometry, then $F_1$ and $F_2$ are congruent in sufficiently small neighborhoods of $P_1$ and $P_2$.    
\end{thmPrime}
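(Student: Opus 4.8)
The plan is to reduce Theorem~\ref{Thm:Prime1} to the argument that Senkin used for Theorem~\ref{Thm:Sen2a}: as the Remark preceding the statement indicates, that argument is insensitive to smoothness \emph{except} at one step, the convexity of the isometric combination, and precisely that step is supplied without any regularity hypothesis by the method of \cite{Bor}. So the first thing I would do is fix the local picture. Since $P_1$ is a point of strict convexity of $F_1$, a sufficiently small neighbourhood $U_1\subset F_1$ of $P_1$ is a convex cap: it lies strictly to one side of the tangent hyperplane $T_{P_1}F_1$ and is the graph $x_{n+1}=f_1(x_1,\dots,x_n)$ of a convex function over a domain of that hyperplane. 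Let $\varphi\colon F_1\to F_2$ be the given isometry and $U_2=\varphi(U_1)\ni P_2$; shrinking $U_1$ if necessary, $U_2$ is a convex piece of $F_2$. Parametrise $U_1$ and $U_2$ by one and the same domain $D\subset\mathbb R^n$ carrying the common induced metric $ds^2$, by maps $r_1,r_2\colon D\to E^{n+1}$ with $dr_1\!\cdot\!dr_1=dr_2\!\cdot\!dr_2=ds^2$, and let $p_0\in D$ be the common preimage of $P_1$ and $P_2$. Composing $r_2$ with a rigid motion of $E^{n+1}$ (and, if needed, a reflection, which is permitted since congruence is taken with respect to the full isometry group) I normalise so that $r_1$ and $r_2$ have the same value and the same differential at $p_0$; this pins down the comparison.

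Next I would form the \emph{isometric combination}. Set $r^{\ast}=\tfrac12(r_1+r_2)$ and $\rho=\tfrac12(r_1-r_2)$, both maps $D\to E^{n+1}$. From $dr_1\!\cdot\!dr_1=dr_2\!\cdot\!dr_2$ one computes $dr^{\ast}\!\cdot\!d\rho=\tfrac14(dr_1\!\cdot\!dr_1-dr_2\!\cdot\!dr_2)=0$ and $g^{\ast}+h=ds^2$, where $g^{\ast}=dr^{\ast}\!\cdot\!dr^{\ast}$ and $h=d\rho\!\cdot\!d\rho$; equivalently $(r^{\ast},\rho)\colon D\to E^{n+1}\times E^{n+1}$ is an isometric immersion of $(D,ds^2)$ (the pullback metric $g^{\ast}+h=ds^2$ is nondegenerate, so this is indeed an immersion). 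The decisive point is that the isometric combination $r^{\ast}(D)$ is itself a convex hypersurface of $E^{n+1}$. In Senkin's proof of Theorem~\ref{Thm:Sen2a} this is exactly the place where $C^1$-smoothness was invoked; but by the argument of \cite{Bor} the half-sum of two isometric convex hypersurfaces (taken in a common intrinsic parametrisation) is convex with no regularity assumption. Granting this, $r^{\ast}$ is a convex graph near $P^{\ast}=r^{\ast}(p_0)$, so its second fundamental form is semidefinite and, by the Gauss equation, $g^{\ast}$ has non-negative sectional curvature — as does $ds^2$, being the metric of the convex hypersurface $F_1$.

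With the convexity of $r^{\ast}(D)$ in hand I would then run Senkin's argument unchanged. Using the identities $g^{\ast}+h=ds^2$ and $dr^{\ast}\!\cdot\!d\rho=0$, the Gauss equations for $r^{\ast}$ and for the immersion $(r^{\ast},\rho)$, and the fact that both $g^{\ast}$ and $ds^2$ are metrics of convex hypersurfaces, one shows that near $p_0$ no nontrivial contribution of $\rho$ can be split off the curvature of $ds^2$: concretely $d\rho\equiv0$ in a neighbourhood of $p_0$, so $\rho$ is locally constant there, and then $\rho\equiv0$ by the normalisation of the previous paragraph. Hence $r_1\equiv r_2$ near $p_0$, that is, $U_1$ and $U_2$ coincide after a rigid motion, which is the asserted congruence in small neighbourhoods of $P_1$ and $P_2$. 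Each of these steps already appears in \cite{Sen2} and none of them uses smoothness, so the theorem follows.

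The genuine obstacle is the one the paper isolates: proving, without $C^1$-regularity, that the half-sum $r^{\ast}(D)$ is a convex hypersurface. For smooth convex graphs one simply differentiates and reads off semidefiniteness of the relevant Hessian; in the non-smooth case one must argue with supporting hyperplanes and the intrinsic metric directly, or approximate $F_1$ and $F_2$ by smooth convex hypersurfaces and pass to the limit while keeping control of the combination — this is the content imported from \cite{Bor}. Once it is available, the rest is the Gauss-equation (and maximum-principle) machinery of Senkin, which never used smoothness, together with the elementary local structure of a convex hypersurface at a point of strict convexity.
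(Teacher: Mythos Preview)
Your overall strategy coincides with the paper's: show that, after a suitable rigid motion, the isometric combination $r^\ast=\tfrac12(r_1+r_2)$ is convex near $P_0$, and then invoke Senkin's cap-rigidity argument (which indeed uses the field $\tau=r_1-r_2$ as an infinitesimal bending of $\Phi=r^\ast(D)$ and never needs smoothness). The problem is that you treat the convexity step as a black box from \cite{Bor}, and you normalise by forcing $r_1$ and $r_2$ to agree to first order at $p_0$. Both of these hide exactly the work the paper has to do.

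First, without $C^1$-smoothness there is in general no ``same differential at $p_0$'' to impose: the tangent object at $P_i$ is a convex \emph{cone} $K(P_i)$, not a hyperplane, and the whole question is how to position $K(P_1)$ against $K(P_2)$ so that their isometric combination is again a convex cone. Second, \cite{Bor} deals with \emph{closed} convex hypersurfaces; the paper's contribution here is precisely to manufacture the correct local positioning. This is done by reducing to the structure of the tangent cones $K(P_i)=V_i^s\times E^{n-s}$ and treating the cases $s\ge 3$, $s=2$, $s=1$, $s=0$ separately. The case $s\ge 3$ comes from \cite{Bor} (the cones are congruent). The case $s=2$ requires new work: via the Pogorelov map the problem becomes aligning two closed convex plane curves of equal length so that their isometric combination is convex (Theorem~\ref{Thm:3}), which in turn rests on Lemmas~\ref{Lem:1}--\ref{Lem:3}. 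The dihedral case $s=1$ is handled by a limiting argument with truncated digons. Only after this positioning of the tangent cones is achieved does a Pogorelov--Liberman convexity argument show that the combination stays convex in a full neighbourhood of $P_0$. None of this is supplied by simply citing \cite{Bor}; it is the substance of the present paper. Once that is established, the endgame you describe (Senkin's rigidity of the cap away from flat pieces) is indeed the same as in the paper.
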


Another result by Senkin is the following theorem:

\begin{theorem}[\cite{Sen2}, \cite{Sen3}]
\label{Thm:Sen2b}
Let $F_1, F_2\subset E^{n+1}$, $n\ge 3$, be $C^1$-smooth convex isometric hypersurfaces. Let $P_1 \in F_1$ be a point that does not lie in any flat region of $F_1$ of dimension $n$, $(n-1)$, $(n-2)$, and let $P_2 \in F_2$ be the point corresponding to $P_1$ under the isometry.  Then $F_1$ and $F_2$ are congruent in sufficiently small neighborhoods of $P_1$ and $P_2$.   \qed
\end{theorem}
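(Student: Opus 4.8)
The plan is to reduce Theorem~\ref{Thm:Sen2b} to the case of a point of strict convexity — that is, to Theorem~\ref{Thm:Sen2a}, equivalently to Theorem~\ref{Thm:Prime1} — by analysing the flat region of $F_1$ through $P_1$, along the lines of Senkin's argument. If $n=3$, the excluded dimensions $n,n-1,n-2$ are $3,2,1$, so $P_1$ lies in no flat region of positive dimension; since $F_1$ is $C^1$ its tangent hyperplane is the unique supporting hyperplane at $P_1$, it meets $F_1$ only at $P_1$, so $P_1$ is a point of strict convexity and Theorem~\ref{Thm:Sen2a} applies. Assume then $n\ge 4$, let $L_1$ be the maximal flat region of $F_1$ through $P_1$, and put $j=\dim L_1$; by $C^1$-smoothness $L_1$ is, near $P_1$, an open subset of $F_1\cap T_{P_1}F_1$ spanning a $j$-dimensional affine subspace, and by hypothesis $0\le j\le n-3$. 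The case $j=0$ is again strict convexity, so assume $1\le j\le n-3$.

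First I would establish a local cylindrical structure. Let $V$ be the $j$-dimensional linear space parallel to $L_1$. I would show that in a small neighbourhood of $P_1$ the hypersurface is a cylinder, $F_1=\tilde F_1+V$, where $\tilde F_1=F_1\cap A$ is the section by an affine subspace $A\cong E^{n-j+1}$ whose direction is complementary to $V$ — an $(n-j)$-dimensional convex hypersurface. Here $C^1$-smoothness is essential: uniqueness and continuity of tangent hyperplanes force the maximal flat regions through all points near $P_1$ to be arbitrarily close to being parallel to $V$ and to fit together into the line ruling of a cylinder. One checks moreover that the point $\tilde P_1\in\tilde F_1$ corresponding to $P_1$ is a point of strict convexity of $\tilde F_1$, since a nontrivial segment in $\tilde F_1$ through $\tilde P_1$ would yield a $(j+1)$-dimensional flat region of $F_1$ through $P_1$, contradicting the maximality of $L_1$. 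With respect to this product, the intrinsic metric of $F_1$ near $P_1$ splits as the metric of $\tilde F_1$ times a flat $j$-dimensional metric on $V$.

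Next I would transport this structure across the isometry by means of the local de Rham decomposition. The Euclidean de Rham factor of a Riemannian metric is intrinsically defined, and for a convex hypersurface it is precisely the tangent distribution of the maximal flat regions (the flat directions being exactly those annihilated by the curvature, and the integral manifolds of this distribution being pieces of affine subspaces of $E^{n+1}$); hence near $P_1$ this factor is the one spanned by $V$. Therefore $\varphi$ carries it onto the corresponding factor of $F_2$ near $P_2$, so $F_2$ near $P_2$ is also a cylinder $\tilde F_2+V'$ with $\dim\tilde F_2=n-j$, and $\varphi$ preserves both splittings, inducing an isometry $\tilde\varphi\colon\tilde F_1\to\tilde F_2$ with $\tilde\varphi(\tilde P_1)=\tilde P_2$ together with a Euclidean isometry $V\to V'$. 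Since $n-j\ge 3$ and $\tilde P_1$ is a point of strict convexity, Theorem~\ref{Thm:Prime1} provides a rigid motion $\tilde g$ of $E^{n-j+1}$ agreeing with $\tilde\varphi$ near $\tilde P_1$; combining it with the isometry of the flat factor yields a rigid motion $g$ of $E^{n+1}$ with $g=\varphi$ in a neighbourhood of $P_1$, which is the required congruence.

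Finally I would patch. The argument above applies at every point $Q$ of a small neighbourhood $U$ of $P_1$ in $F_1$: a point of strict convexity via Theorem~\ref{Thm:Prime1}, a point in a flat region of dimension $\le n-3$ via the cylinder reduction; and a point lying near $P_1$ in a flat region of dimension $n-2$, $n-1$ or $n$ must lie in a flat region that shrinks down to $P_1$ — otherwise a Hausdorff limit would give such a flat region through $P_1$ itself — and so is reached as a limit of strictly convex points. Thus each $Q\in U$ carries a rigid motion $g_Q$ with $g_Q=\varphi$ near $Q$; since $U$ contains no $n$-dimensional flat region, no open piece of $F_1\cap U$ lies in a hyperplane, so rigid motions agreeing on an overlap agree identically, and by connectedness of $U$ all the $g_Q$ coincide with one $g$, whence $\varphi=g$ on $U$ and $F_1,F_2$ are congruent near $P_1,P_2$. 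I expect the main obstacle to be the cylindrical-structure step — rigorously controlling, for a merely $C^1$ convex hypersurface, the dimensions and directions of the flat regions throughout a neighbourhood of $P_1$ and concluding that $F_1$ is a cylinder there; this is precisely where $C^1$-smoothness is indispensable, whereas the de Rham splitting (needing a little care when the curved factor is itself reducible) and the patching are comparatively routine.
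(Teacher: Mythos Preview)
Your strategy is fundamentally different from the paper's (and Senkin's), and the step you yourself flag as the ``main obstacle'' is where it breaks. You assert that, since the maximal flat region $L_1$ through $P_1$ has dimension $j$, in a neighbourhood of $P_1$ the hypersurface $F_1$ is a cylinder $\tilde F_1+V$ with $V$ parallel to $L_1$. $C^1$-smoothness does not force this. Continuity of the tangent hyperplane only tells you that nearby flat regions lie in hyperplanes close to $T_{P_1}F_1$; it does not force those regions to have dimension $\ge j$, nor to be \emph{parallel} to $V$. Convex $C^1$ hypersurfaces can be ruled without being cylindrical near a given segment, and the dimension of the face through a point is only upper semicontinuous, not locally constant. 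Without the cylinder structure, the de~Rham splitting argument has nothing to act on. (There is a second regularity issue here as well: the de~Rham decomposition you invoke needs at least a $C^{1,1}$ metric to make curvature and parallel transport meaningful; a merely $C^1$ convex hypersurface does not obviously admit the intrinsic Euclidean factor you describe.) The patching step has a related hole: the hypothesis constrains only $P_1$, so points $Q$ near $P_1$ may well lie in flat regions of dimension $n-2$, $n-1$, and your cylinder reduction is unavailable at those $Q$; the Hausdorff-limit argument says such regions do not pass through $P_1$, not that they are small.

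For comparison, the paper (following Senkin) never reduces to a lower-dimensional hypersurface. Instead it works with the \emph{isometric combination} $r=(r_1+r_2)/2$: the tangent cones $K(P_1),K(P_2)$ are positioned so that their isometric combination is a convex cone (this is the content of Theorem~\ref{Thm:4} and the case analysis in Section~4, where the product structure $V^s\times E^{n-s}$ of the tangent cone, not of the hypersurface, is used), and then a Pogorelov--Liberman argument shows the isometric combination of $F_1,F_2$ is a convex hypersurface $\Phi$ near $P_0$; the field $\tau=r_1-r_2$ is an infinitesimal bending of $\Phi$, and Senkin's cap-rigidity theorem forces $\tau$ to be trivial off the hyperflat parts. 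The flat-region hypothesis on $P_1$ enters only to guarantee that this last rigidity statement bites, not to produce a cylinder.
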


Again, in the proof of this theorem, the assumption of smoothness was used to prove that the isometric combination of $F_1$ and $F_2$ in a neighborhood of $P_1$ and $P_2$ is a convex hypersurface. Since we proved that we can position $F_1$ and $F_2$ in such a way that their isometric combination in a neighborhood of the points $P_1$ and $P_2$ is a convex hypersurface, then Theorem~\ref{Thm:Sen2b} is true without the smoothness assumption.

\begin{thmPrime}
\label{Thm:Prime2}
Let $F_1, F_2 \subset E^{n+1}$, $n \ge 3$, be isometric convex hypersurfaces, and let $P_1 \in F_1$ be a point that does not lie in any flat region of $F_1$ of dimension $n$, $(n-1)$, $(n-2)$, and $P_2 \in F_2$ be the point corresponding to $P_1$ under the isometry. Then sufficiently small neighborhoods of $P_1$ and $P_2$ in $F_1$ and $F_2$ are congruent.
\end{thmPrime}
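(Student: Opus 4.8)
The plan is to reduce the statement to E.\,P.~Senkin's proof of Theorem~\ref{Thm:Sen2b} in \cite{Sen2}. That proof invokes the $C^1$-regularity of $F_1$ and $F_2$ at exactly one place — to show that the isometric combination of the two hypersurfaces near $P_1$, $P_2$ is a convex hypersurface — and this step is supplied in the present paper with no regularity hypothesis whatsoever. So the real content of the proof is the correct set-up of the combination and the verification that nothing else in Senkin's argument uses smoothness.

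Concretely, let $\varphi\colon F_1\to F_2$ be the given isometry and $P_2=\varphi(P_1)$. First I would fix a small neighborhood $U_1$ of $P_1$ in $F_1$ and an $(n-1)$-dimensional cut $L_1\subset U_1$ through $P_1$ — a small geodesic sphere or a hyperplane section of $F_1$ — chosen in general position with respect to the (at most $(n-3)$-dimensional) flat faces of $F_1$ through $P_1$, so that $L_1$ separates $U_1$ into two pieces $U_1^{+}$, $U_1^{-}$ and carries the non-degeneracy that Senkin's argument needs; set $L_2=\varphi(L_1)$ and $U_2^{\pm}=\varphi(U_1^{\pm})$. Then I would apply rigid motions to $F_1$ and $F_2$ (and, if the orientations demand it, a reflection to $F_2$) and form the isometric combination $\Phi=U_1^{+}\cup_{L_1\sim L_2}U_2^{-}$. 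By the result established earlier in this paper, under the present hypothesis that $P_1$ lies in no flat region of $F_1$ of dimension $n$, $(n-1)$ or $(n-2)$, the motions can be chosen so that $\Phi$ is a convex hypersurface in $E^{n+1}$ near $P_1=P_2$; this replaces Senkin's appeal to $C^1$-smoothness.

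From here the argument is Senkin's and is purely convexity-theoretic. Convexity of $\Phi$ together with the non-flatness of $F_1$ at $P_1$ forces the supporting hyperplanes of $\Phi$ reached from the two sides of the gluing locus to agree, so $\Phi$ has no genuine ridge along $L_1$; analysing the tangent cones of the two glued pieces along $L_1$ — where $n\ge 3$ and the absence of flat faces of dimension $n$, $(n-1)$, $(n-2)$ through $P_1$ are both used — one upgrades this to congruence of $U_1$ with $U_2$, the congruence being the composition of the rigid motions (and reflection) applied above; alternatively one may close $\Phi$ up into a closed convex hypersurface and invoke the unconditional global rigidity of \cite{Bor}, but in either route it is precisely the non-smooth convexity of the combination that makes the argument run. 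The step I expect to require the most care is therefore the reduction rather than any new estimate: one must check that the convexity-of-combination theorem of this paper applies under the weaker non-flatness hypothesis (and for a suitably generic cut $L_1$), and then that every subsequent step of \cite{Sen2} — from "$\Phi$ is convex" onward — remains valid in the non-smooth category. The genuinely delicate geometric input, convexity of the isometric combination with no regularity assumed, has already been provided.
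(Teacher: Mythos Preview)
Your overall strategy --- reduce to Senkin's argument by supplying the one missing ingredient, convexity of the isometric combination --- matches the paper's. But the object you call the ``isometric combination'' is not the one that Senkin (and this paper) use, and this is a genuine gap, not a cosmetic difference.

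You take a cut $L_1$ through $P_1$ and glue $U_1^{+}$ to $U_2^{-}$ along $L_1\sim L_2$. The isometric combination in Pogorelov's and Senkin's sense is \emph{not} a gluing; it is the hypersurface $\Phi$ parametrised by $r=(r_1+r_2)/2$, where $r_1,r_2$ are the position vectors of corresponding points on $F_1,F_2$ after the surfaces have been suitably positioned. The whole point of this construction is that the difference field $\tau=r_1-r_2$ is then an infinitesimal bending of $\Phi$ (i.e.\ $\langle dr,d\tau\rangle=0$ almost everywhere), and Senkin's argument is an infinitesimal-rigidity argument for the convex $\Phi$, forcing $\tau$ to be trivial away from flat pieces. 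A glued surface $U_1^{+}\cup_{L_1}U_2^{-}$ carries no such bending field, so Senkin's mechanism does not even start; conversely, the convexity theorem proved in this paper (Theorem~\ref{Thm:4} and the cone analysis in Section~4) is a statement about the sum $r_1+r_2$, not about any gluing, so the ``result established earlier in this paper'' that you invoke does not apply to your $\Phi$.

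Concretely: drop the cut $L_1$ and the gluing. Position $F_1$ and $F_2$ so that $P_1=P_2$ and the tangent cones $K(P_1),K(P_2)$ have convex isometric combination (this is the case analysis 1)--5) of Section~4, using Theorem~\ref{Thm:4}); then the surface $r=(r_1+r_2)/2$ is convex near $P_0$, and from that point on Senkin's proof in \cite{Sen2} runs verbatim.
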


Now we will prove that a pair of convex isometric hypersurfaces $F_1$ and $F_2$ can be positioned by aligning a pair of corresponding under the isometry points $P_1 \in F_1$ and $P_2 \in F_2$ in such a way that in a neighborhood of $P_1, P_2$ the isometric combination of the hypersurfaces is again a convex hypersurface.

Let $F_1, F_2 \subset E^2$ be a pair of simple closed convex positively oriented curves of the same length and with a pair of marked points $P_1 \in F_1, P_2 \in F_2$. By a parallel translation of the curves, we can align the points $P_1$ and $P_2$. By a rotation around the point $P := P_1 = P_2$, we can assume that $F_1$ and $F_2$ have a common supporting line at $P$ (so that $F_1$ and $F_2$ lie in the same half-plane with respect to this line). In this way, the regions bounded by the curves $F_1$ and $F_2$ have a non-empty intersection. Let $P_1(s), P_2(s)$ be the points on $F_1, F_2$ such that the lengths of the arcs $PP_1(s)$, $PP_2(s)$ along $F_1, F_2$ are both equal to $s$. Denote by $r_1(s), r_2(s)$ the position vectors of $P_1(s), P_2(s)$. Then the \emph{isometric combination} of the curves $F_1$, $F_2$ is the set $F$ given by the radius-vector
\[
r(s) = r_1(s) + r_2(s).
\]

We denote by $P(s)$ the isometric combination of $P_1(s)$ and $P_2(s)$.

\begin{lemma}
\label{Lem:1}
Let $F_1, F_2 \subset E^2$ be a pair of simple closed convex positively oriented polygons
of the same length and with a pair of marked points $P_1 \in F_1$ and $P_2 \in F_2$.
If for every $s$, the angle between the right half-tangents at the points $P_1(s), P_2(s)$ is less than $\pi$, then $F$ is a simple closed convex polygon.
\end{lemma}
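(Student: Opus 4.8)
The plan is to pass to turning (tangent-angle) functions and then invoke the standard fact that a closed polygonal line whose tangent direction turns monotonically through a total angle $2\pi$ bounds a convex polygon. Parametrize $F_1$ and $F_2$ by arc length $s\in[0,L]$, where $L$ is their common length, so that $P_i=P_i(0)$. Let $\tau_i(s)$ be the unit right half-tangent of $F_i$ at $P_i(s)$, and write $\tau_i(s)=(\cos\omega_i(s),\sin\omega_i(s))$; since $F_i$ is a positively oriented convex polygon, $\omega_i$ lifts to a non-decreasing right-continuous step function on $[0,L]$ with $\omega_i(L)=\omega_i(0)+2\pi$ whose jumps (the exterior angles of $F_i$) lie in $(0,\pi)$. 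Because $F_1,F_2$ have the same length, $r(L)=r_1(L)+r_2(L)=r_1(0)+r_2(0)=r(0)$, so $F$ is closed; and since each $r_i$ is piecewise linear in $s$, so is $r$, hence $F$ is a closed polygonal line. It remains to prove that $F$ is convex, which for a closed polygonal line also yields simplicity.

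Set $\psi(s)=\omega_1(s)-\omega_2(s)$. A direct computation gives
\[
r'(s)=\tau_1(s)+\tau_2(s)=2\cos\tfrac{\psi(s)}{2}\,\bigl(\cos\tfrac{\omega_1(s)+\omega_2(s)}{2},\ \sin\tfrac{\omega_1(s)+\omega_2(s)}{2}\bigr),
\]
so whenever $\cos\frac{\psi(s)}{2}>0$ the right half-tangent direction of $F$ at $P(s)$ equals $\Omega(s):=\tfrac12\bigl(\omega_1(s)+\omega_2(s)\bigr)$, the bisector of the directions $\tau_1(s),\tau_2(s)$. The crucial step is to deduce from the hypothesis that $\cos\frac{\psi(s)}{2}>0$ for all $s$. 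The case $s=0$ is the hypothesis at $s=0$, which says $\psi(0)\not\equiv\pi\pmod{2\pi}$; after possibly interchanging $F_1$ and $F_2$ (which leaves $r=r_1+r_2$ unchanged) and adjusting the lift of $\omega_2$ by a multiple of $2\pi$, we may assume $\psi(0)\in[0,\pi)$, and then the hypothesis reads $\psi(s)\bmod 2\pi\in[0,\pi)$ for every $s$. Now $\psi$ is a step function whose jumps are differences of exterior angles of $F_1$ and $F_2$, hence lie in $(-\pi,\pi)$; so $\psi$ cannot pass in a single jump from $[0,\pi)$ into $[2\pi,\infty)$ or into $(-\infty,-\pi]$. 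Together with the constraint $\psi(s)\bmod 2\pi\in[0,\pi)$ (which excludes the lifts lying in $[-\pi,0)$ and in $[\pi,2\pi)$), this forces $\psi(s)\in[0,\pi)$ for all $s$. Hence $\cos\frac{\psi(s)}{2}\in(0,1]$ throughout; in particular $r'(s)\neq0$, so $F$ has no collapsed edge, and the right half-tangent direction of $F$ at $P(s)$ is $\Omega(s)$.

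Finally, $\Omega=\tfrac12(\omega_1+\omega_2)$ is non-decreasing as a sum of non-decreasing functions; it satisfies $\Omega(L)-\Omega(0)=\tfrac12(2\pi+2\pi)=2\pi$; and each jump of $\Omega$ equals $\tfrac12$ times the sum of the corresponding exterior angles of $F_1$ and $F_2$, hence lies in $[0,\pi)$. A closed polygonal line whose tangent direction is non-decreasing with total increment exactly $2\pi$ and all exterior angles in $[0,\pi)$ is the positively oriented boundary of a convex polygon, namely the intersection of the closed half-planes supported by its edges, and is in particular simple. Therefore $F$ is a simple closed convex polygon.

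I expect the only genuine obstacle to be the middle step: showing that the pointwise hypothesis globally traps $\psi(s)$ in a fixed interval of length $\pi$ about $0$, so that the bisecting direction $\Omega(s)$ is well defined and turns monotonically instead of flipping by $\pi$ somewhere. This is exactly where one uses both the hypothesis and the fact that the exterior angles of a convex polygon are strictly less than $\pi$, and it is the reason the conclusion fails if the angle between the half-tangents is permitted to equal $\pi$ at some $s$.
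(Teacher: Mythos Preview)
Your argument has a genuine gap at exactly the step you flagged as the ``only genuine obstacle''. You correctly note that at $s=0$ the hypothesis says $\psi(0)\not\equiv\pi\pmod{2\pi}$, and after a swap you normalize $\psi(0)\in[0,\pi)$. But your next sentence, ``and then the hypothesis reads $\psi(s)\bmod 2\pi\in[0,\pi)$ for every $s$'', is not justified: the phrase ``angle between the right half-tangents'' is the unsigned angle in $[0,\pi]$, so for every $s$ the hypothesis only excludes $\psi(s)\equiv\pi\pmod{2\pi}$. It does \emph{not} exclude $\psi(s)\bmod 2\pi\in(\pi,2\pi)$. Once that exclusion is dropped, your jump argument collapses: from $\psi(s^-)$ just below $\pi$, a positive jump in $(0,\pi)$ can land $\psi(s)$ strictly above $\pi$ without ever hitting $\pi$, and on that interval $\cos\tfrac{\psi}{2}<0$, so the tangent direction of $F$ is $\Omega(s)+\pi$ rather than $\Omega(s)$, producing a reflex turn.

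This is not a fixable technicality of your write-up; with the hypothesis read literally the conclusion actually fails. For instance, take $F_1$ the unit square (tangent directions $0,\tfrac{\pi}{2},\pi,\tfrac{3\pi}{2}$ on successive unit intervals) and $F_2$ an equilateral triangle of the same perimeter with initial tangent direction $-\tfrac{3\pi}{4}$. Then $\psi$ takes the values $\tfrac{3\pi}{4},\,\tfrac{5\pi}{4},\,\tfrac{7\pi}{12},\,\tfrac{13\pi}{12},\,\tfrac{5\pi}{12},\,\tfrac{11\pi}{12}$ on the successive intervals; none is $\equiv\pi$, so the hypothesis holds everywhere, yet $\psi$ enters $(\pi,2\pi)$ and the resulting $F$ has non-monotone tangent direction. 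What rescues the lemma in the paper is the positioning fixed in the paragraph \emph{before} the lemma (the curves share a supporting line at $P_1=P_2$ and their bounded regions intersect); this is used explicitly in cases~(2) and~(3) of the paper's proof to rule out the bad angular configurations. Your turning-angle framework is cleaner than the paper's case analysis, but to close the argument you must bring in that geometric positioning and show it forces the lift of $\psi$ to remain in $(-\pi,\pi)$; the pointwise ``angle $<\pi$'' hypothesis alone does not do it.
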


\begin{proof}
%We first prove this lemma in the case when $F_1, F_2$ are convex polygons. 
Only the following three cases are possible:

\begin{enumerate}
\item
The points $P_1(s), P_2(s)$ lie in the interior of the sides of the polygons. Then the isometric combination of the neighborhoods of points will be a segment containing the point $P(s)$.

\item
The point $P_1(s)$ lies in the interior of the side of $F_1$ and the point $P_2(s)$ is the vertex of $F_2$. Recall that the convex polygons in question have the same orientation. Let $\alpha_i$, respectively $\delta_i$ be the angle between the right, respectively left semi-tangents at the points $P_1(s)$ and $P_2(s)$, $\beta^1_i = \pi$ and $\beta^2_i < \pi$ be the angles of the polygons at the vertices $P_1(s), P_2(s)$, and let $\gamma_i$ be the angle between the right semi-tangent at $P_2(s)$ and the left semi-tangent at $P_1(s)$.  By construction, $P(s)$ is the vertex with angle $\beta_i := \alpha_i / 2 + \delta_i / 2 + \gamma_i$. But
\[
\delta_i + \gamma_i = \beta_i^1 = \pi, \quad \alpha_i + \gamma_i = \beta_i^2 < \pi.
\]
This gives that
\[
\beta_i = \frac{\alpha_i}{2} + \frac{\beta_i}{2} + \gamma_i = \frac{\beta^1_i + \beta^2_i}{2} < \pi.
\]
Observe that we have chosen the orientation of $F_1$ in such a way that $F_1$ lies in the upper half-plane with respect to the side of $F_1$ containing $P_1(s)$. It is therefore impossible that the both sides of $F_2$ at the vertex $P_2(s)$ lie in the lower half-plane (otherwise, the polygons $F_1$ and $F_2$ do not intersect, which is a contradiction to our choice above). 

\item
The points $P_1(s), P_2(s)$ are the vertices of their respective polygons. Let us make a parallel translation so that the vertices $P_1(s), P_2(s)$ coincide. Without loss of generality, we can assume that the distance of the vertex $P_1(s)$ to the supporting line at $P_1 = P_2$ is not greater than the distance of the vertex $P_2(s)$. Then the parallel translation $T$ of the point $P_1(s)$ to the point $P_2(s)$ will move the polygon $F_1$ to the polygon $TF_1$ that also intersects $F_2$. The case when the solid angles at the vertices $TP_1(s)$ and $P_2(s)$ do not intersect is impossible because otherwise the intersection of polygons is empty. We obtain that the angle at the vertex $P(s)$ of the isometric combination is $\beta_i = (\beta_i^1 + \beta_i^2)/2 < \pi$. 
\end{enumerate}

For the convex polygons $F_1, F_2$, we have that
\[
\sum (\pi - \beta_i^1) = 2\pi, \quad \sum (\pi - \beta_i^2) = 2\pi.
\]
Thus
\[
\sum (\pi - \beta_i) = \beta_i = \sum \frac{\pi - \beta_i^1}{2} + \sum \frac{\pi - \beta_i^2}{2} = 2\pi. 
\]
Since $(\pi - \beta_i) \ge 0$, the closed polygonal line $F$ will be a closed convex polygon. We obtain that $F$ is the isometric combination of $F_1$ and $F_2$.  
\end{proof}

This result generalizes easily to the case of arbitrary convex curves.

\begin{lemma}
\label{Lem:2}
Let $F_1, F_2$ be closed convex curves of the same length $s_0$ and with the same orientation. Let $P_1 \in F_1$, $P_2 \in F_2$ be a pair of marked points with respect to which the length of arcs along the curve is measured. Assume that $P_1 = P_2$ and at this point the curves $F_1, F_2$ have a common supporting line. Further assume that the regions bounded by $F_1, F_2$ intersect. If at the corresponding with respect to the arc length points $P_1(s), P_2(s)$ the angle between the right semi-tangents is less than $\pi$, then the isometric combination of $F_1, F_2$ is a closed convex curve.  
\end{lemma}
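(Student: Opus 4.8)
The plan is to imitate, for arbitrary convex curves, the averaging of tangent directions that is implicit in Lemma~\ref{Lem:1}, and to note that the same conclusion also follows by polygonal approximation reducing to Lemma~\ref{Lem:1}. First I would normalize the picture exactly as in the hypotheses: after the parallel translation and rotation already described, $P_1=P_2=:P$, the two curves share a supporting line $\ell$ at $P$ and lie in the same closed half-plane $H$ bounded by $\ell$, and the regions they bound meet. Parametrize each $F_i$ by arc length $r_i\colon[0,s_0]\to E^2$ with $r_i(0)=P$ and orientation matching the positive orientation of $F_i$. Convexity of $F_i$ furnishes a non-decreasing function $\theta_i\colon[0,s_0]\to\mathbb R$ recording the angle of the right tangent direction $e_i(s)$, which I normalize so that $\theta_i(0^+)$ lies in the half-turn of directions pointing into $H$ (both $e_1(0^+)$ and $e_2(0^+)$ do, since $F_i\subset H$ is convex). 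Then $\theta_i(s_0)-\theta_i(0)=2\pi$, the one-sided limits $\theta_i^\pm$ are the angles of the one-sided semitangents of $F_i$, and the jump $\theta_i^+(s)-\theta_i^-(s)\in[0,\pi]$ is the exterior angle of $F_i$ at $P_i(s)$. With this normalization the hypothesis that the angle between the right semitangents at $P_1(s),P_2(s)$ be less than $\pi$ for every $s$ means $\theta_1^+(s)-\theta_2^+(s)\in(-\pi,\pi)$ for all $s$, and, taking left limits, $\theta_1^-(s)-\theta_2^-(s)\in[-\pi,\pi]$.

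Next I would compute directly with $r=r_1+r_2$. Where the derivatives exist,
\[
r'(s)=e_1(s)+e_2(s)=2\cos\!\Big(\frac{\theta_1(s)-\theta_2(s)}{2}\Big)\big(\cos\psi(s),\sin\psi(s)\big),\qquad \psi:=\frac{\theta_1+\theta_2}{2}.
\]
Because $|\theta_1-\theta_2|<\pi$ the scalar factor is positive, so $r$ never reverses direction; hence its one-sided tangent directions at $P(s)$ have angles $\psi^\pm(s)$, the function $\psi$ is non-decreasing with $\psi(s_0)-\psi(0)=\tfrac12(2\pi+2\pi)=2\pi$, and the exterior angle of $F$ at $P(s)$ equals $\psi^+(s)-\psi^-(s)=\tfrac12\big((\theta_1^+-\theta_1^-)+(\theta_2^+-\theta_2^-)\big)\le\pi$. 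Moreover $r(s_0)=r_1(s_0)+r_2(s_0)=r_1(0)+r_2(0)=r(0)$, so $F$ is closed. A closed curve admitting a monotone tangent-direction function of total turning $2\pi$ with all exterior angles $\le\pi$ is the boundary of a convex body (it has a supporting line at every point), hence is a simple closed convex curve; this is the assertion.

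Alternatively, and this is the "easy generalization" alluded to, one inscribes in each $F_i$ the polygon $F_i^{(k)}$ with vertices $P_i(t_j^{(k)})$ for a sequence of partitions of $[0,s_0]$ of mesh tending to $0$, each polygon carried with the parameter $t$ inherited from $F_i$ (linear on every edge). The proof of Lemma~\ref{Lem:1} uses only the angle hypothesis at corresponding parameters and the Gauss--Bonnet identities $\sum(\pi-\beta_i^1)=\sum(\pi-\beta_i^2)=2\pi$, and goes through unchanged for two polygons carrying matched parametrizations of this kind even when their lengths differ; it shows $F^{(k)}$ is a convex polygon for $k$ large, and as $k\to\infty$ the convex bodies bounded by $F^{(k)}$ converge in Hausdorff distance to a non-degenerate convex body bounded by $F$. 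In either route the main obstacle is the bookkeeping at corners and flat arcs: one must ensure that the difference of tangent-direction angles stays inside the open interval $(-\pi,\pi)$, not merely that it avoids the value $\pi$ — this is where the common supporting line at $P$ and the normalization of branches are used — and that the exterior angles of the combination never exceed $\pi$, which uses precisely that $F_1$ and $F_2$ are convex (their exterior angles are $\le\pi$); the strictness of the hypothesis is exactly what keeps the combination from degenerating.
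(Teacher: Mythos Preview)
Your second approach---inscribe polygons, verify the angle hypothesis for them, apply Lemma~\ref{Lem:1}, and pass to the limit---is exactly the paper's proof, with one cosmetic difference: the paper rescales one inscribed polygon by a dilatation centered at $P$ so that the two polygons have equal length and Lemma~\ref{Lem:1} applies verbatim, whereas you keep the inherited parameter and observe that the computations in Lemma~\ref{Lem:1} never use equality of lengths. Either device works and the limit is the same.

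Your first, direct approach via the averaged tangent-angle function $\psi=\tfrac12(\theta_1+\theta_2)$ is a genuinely different and cleaner route: it makes transparent why the combination turns monotonically through $2\pi$ and why its exterior angles are the averages of those of $F_1,F_2$. The one soft spot is precisely the point you flag at the end. The hypothesis only says the \emph{unsigned} angle between right semi-tangents is $<\pi$, i.e.\ $\theta_1-\theta_2\not\equiv\pi\pmod{2\pi}$; to get the positive cosine factor you need the \emph{signed} difference to stay in $(-\pi,\pi)$. Since $\theta_1-\theta_2$ can jump by any amount in $(-\pi,\pi)$ at a common corner, it can in principle hop from just below $\pi$ to just above $\pi$ without ever hitting $\pi$, so the branch normalization at $P$ and the common supporting line alone do not obviously pin it down; this is where the ``regions intersect'' hypothesis (used in cases~2 and~3 of Lemma~\ref{Lem:1}) would have to enter. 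Your polygonal route sidesteps the issue by delegating it to Lemma~\ref{Lem:1}; if you want the direct argument to stand on its own, that step needs a sentence more.
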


\begin{proof}
Let $F_1^n ,F_2^n$ be a sequence of convex polygons inscribed into $F_1, F_2$ such that
\[
\lim_{n \to \infty} F_1^n = F_1, \quad \lim_{n \to \infty} F_2^n = F_2.
\]
Then the lengths of $F_1^n, F_2^n$ tend to $s_0$. Let us make a dilatation at a fixed point $P_1 = P_2 = P$ so that the length of image polygon $\tilde F_1^n$ coincides with the length of $F_2^n$. For sufficiently large $n$, the angles between the tangents to $\tilde F_1^n, F_2^n$ at the corresponding under the isometry points is less than $\pi$. By Lemma~\ref{Lem:1}, the isometric combination of the polygons $\tilde F_1^n$, $F_2^n$ is a closed convex polygon $F^n$. The limit of the polygons $F^n$ is the isometric combination of the curves $F_1, F_2$. This will be a convex curve $F$.   
\end{proof}

\begin{lemma}
\label{Lem:3}
Let $F_1, F_2$ be two closed nondegenerate convex curves with marked points $P_1, P_2$. Suppose that $F_1, F_2$ have the same orientation and the same length $s_0$. Then there exists an isometry of the plane that moves these curves so that the angles between the right semi-tangents at the points $P_1(s), P_2(s)$ is less than $\pi$, where $s$ is the arc length measured starting from $P_1=P_1(0), P_2=P_2(0)$. 
\end{lemma}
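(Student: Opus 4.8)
The plan is to observe that, among the isometries of the plane, only the rotational part affects the directions of the right semi-tangents: a translation leaves them unchanged, and we use an orientation-preserving isometry so as not to disturb the matched orientations or the arc-length parametrizations. Hence it is enough to produce a single angle $\phi$ such that, after rotating $F_2$ by $\phi$ (and then translating the curves however we please), the angle between the right semi-tangents at $P_1(s)$ and $P_2(s)$ is less than $\pi$ for \emph{every} $s$. This reduces the lemma to a one-parameter question about the turning functions of the two curves.

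I would set it up as follows. For $i=1,2$, let $\theta_i$ be a real-valued, right-continuous function on $[0,s_0]$ recording the direction of the right semi-tangent to $F_i$ at $P_i(s)$. Since $F_i$ is a simple closed convex curve, $\theta_i$ is non-decreasing and $\theta_i(s_0)-\theta_i(0)=2\pi$ (the rotation index of a convex Jordan curve). A rotation of $F_2$ by $\phi$ replaces $\theta_2$ by $\theta_2+\phi$, and then the angle between the two right semi-tangents at $P_1(s),P_2(s)$ is at most $\lvert\theta_1(s)-\theta_2(s)-\phi\rvert$. So, writing $g:=\theta_1-\theta_2$, it suffices to find $\phi$ with $\lvert g(s)-\phi\rvert<\pi$ for all $s\in[0,s_0]$, that is, $\phi\in\bigl(\sup_{[0,s_0]}g-\pi,\ \inf_{[0,s_0]}g+\pi\bigr)$; and this interval is non-empty precisely when the oscillation $\sup g-\inf g$ is strictly less than $2\pi$.

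The crux is therefore the strict estimate $\sup g-\inf g<2\pi$. For $0\le s\le s'\le s_0$ we have $g(s')-g(s)=\bigl(\theta_1(s')-\theta_1(s)\bigr)-\bigl(\theta_2(s')-\theta_2(s)\bigr)$, and each bracket lies in $[0,2\pi]$ by monotonicity together with $\theta_i(s_0)-\theta_i(0)=2\pi$; hence $\lvert g(s')-g(s)\rvert\le 2\pi$ and $\sup g-\inf g\le 2\pi$. Suppose equality held. A short limiting argument (using only that $\theta_1,\theta_2$ are monotone) then produces parameters $s<s'$ with, say, $g(s')-g(s)=2\pi$ — the opposite sign being symmetric under interchanging $F_1$ and $F_2$ — which forces $\theta_1(s')-\theta_1(s)=2\pi$ and $\theta_2(s')-\theta_2(s)=0$. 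The first identity says $\theta_1$ is constant (up to an irrelevant additive $2\pi$, i.e.\ along one fixed direction) on the complementary arc $A:=[s',s_0]\cup[0,s]$ of the parameter circle, so $F_1$ coincides with a straight segment along $A$; the second says $F_2$ coincides with a straight segment along $B:=[s,s']$. But each of these segments is a chord of a convex Jordan curve, hence strictly shorter than half of that curve's length, giving $\lvert A\rvert<s_0/2$ and $\lvert B\rvert<s_0/2$, which contradicts $\lvert A\rvert+\lvert B\rvert=s_0$. (The boundary sub-cases $s=0$ or $s'=s_0$ are handled verbatim, with $A$ or $B$ an ordinary non-wrapping sub-arc; $s=0$ and $s'=s_0$ together would give $g(s')-g(s)=0$.)

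Hence $\sup g-\inf g<2\pi$; picking any $\phi$ in the non-empty interval $\bigl(\sup g-\pi,\ \inf g+\pi\bigr)$, rotating $F_2$ by $\phi$, and finally translating the two curves as convenient yields the assertion. I expect the only genuine obstacle to be the \emph{strictness} of the oscillation bound — everything else is bookkeeping with monotone functions — and that strictness comes entirely from the elementary fact that a chord of a convex curve is strictly shorter than half of its perimeter.
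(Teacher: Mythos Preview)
Your argument is correct and genuinely different from the paper's. The paper proceeds by re-basing: it aligns the curves at $P_1(s),P_2(s)$ with matching right semi-tangents for each $s$, assumes by contradiction that an opposite-tangent pair $P_i(f(s))$ always appears, and then chooses $s$ so that the accumulated turning $\tau_2(f(s))$ exceeds $\pi$, forcing $\tau_1(f(s))>2\pi$ before the curve closes up. Your route is more analytic: you parametrize the admissible isometries by a single rotation angle $\phi$, reduce the statement to the oscillation bound $\operatorname{osc}(\theta_1-\theta_2)<2\pi$, and obtain the \emph{strict} inequality from the elementary fact that a straight sub-arc of a convex Jordan curve has length $<s_0/2$. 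This gives a cleaner, more quantitative conclusion (an open interval of good $\phi$'s), and also makes transparent exactly where convexity and simplicity of the curves enter.

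One small point to tighten: your ``short limiting argument'' asserts that equality $\operatorname{osc} g=2\pi$ yields actual parameters $s<s'$ with $g(s')-g(s)=\pm 2\pi$. Since $g=\theta_1-\theta_2$ is only right-continuous, the extremum may be realized as a left limit $g(a^-)$ rather than a value $g(a)$. The fix is immediate---run the same bracket argument with $\theta_i(a^-)$ in place of $\theta_i(a)$; the conclusions ``$\theta_1$ constant on the complementary arc'' and ``$\theta_2$ constant on $[s,s']$'' and hence the chord-length contradiction go through verbatim---but it is worth saying so explicitly.
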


\begin{proof}
Let us align the points $P_1(0)$ and $P_2(0)$ using an appropriate isometry of the plane in such a way that these points coincide and that their right semi-tangents at these points also coincide. 
If the curve $F_1$ is strictly convex, then it is possible to choose as the point $P_1(0)$ any point of $F_1$. If on the curve $F_1$ there exists a line segment, then the beginning of the segment we choose as the point $P_1(0)$. The orientation on the segment is the same as the orientation on the curve $F_1$.
If for every $s \in (0, s_0]$, the angle between the right semi-tangent at $P_1(s)$ and $P_2(s)$ is $<\pi$, then we are done. So suppose this is not the case. Then there is a pair of points $P_1(\sigma_0)$, $P_2(\sigma_0)$ such that the right semi-tangents at those points have the opposite direction. For definiteness, we can assume that the turning of the arc $P_1(0)P_1(\sigma_0)$ is bigger than the turning of the arc $P_2(0)P_2(\sigma_0)$. Then
\[
\tau_1(\sigma_0) = \tau_2(\sigma_0) + \pi,
\] 
where for $i \in \{1,2\}$, $\tau_i(s)$ is the turning of the arc of the curve $F_i$ from the point $P_i(0)$ to the point $P_i(s)$. Observe that $\tau_2(\sigma_0) < \pi$ because the complete turning of a closed convex curve is equal to $2\pi$ and the point $P_1(\sigma_0)$ does not coincide with the point $P_1(0)$. Furthermore, both $\tau_1(s)$ and $\tau_2(s)$ are non-negative and non-decreasing functions.

For each $s \in [0, s_0]$, let us align the points $P_1(s)$ and $P_2(s)$ in the same way as we did above for $P_1(0)$ and $P_2(0)$, and assume that we have the same situation with the oppositely oriented right semi-tangents. That is, for the points $P_1(s)$, $P_2(s)$ we would find the points $P_1\big(f(s)\big)$, $P_2\big(f(s)\big)$ such that
\[
\omega_1(s) = \omega_2(s) + \pi, \quad 0 < \omega_2(s) < \pi,
\]  
where $\omega_1(s)$, $\omega_2(s)$ are the turnings of the arcs $P_1(s) P_1\big(f(s)\big)$, $P_2(s) P_2\big(f(s)\big)$.

Let us pick such $s$ so that
 \begin{equation}\label{turning}
    2\pi > \tau_2(s) + \omega_2\big(f(s)\big) = \tau_2\big(f(s)\big) > \pi.
 \end{equation}
 
Then $\tau_1\big(f(s)\big) > 2\pi$. But since $\tau_2\big(f(s)\big) < 2\pi$, then $f(s) < s_0$ and $\tau_1\big(f(s)\big) < 2\pi$. This is a contradiction. Therefore, there exists $\sigma_0 < s_0$ such that the isometry that maps $P_1(\sigma_0)$ to $P_2(\sigma_0)$, and the right semi-tangent at $P_1(\sigma_0)$ to the right semi-tangent at $P_2(\sigma_0)$ is the required isometry.

Let us explain it in detail. Let us take on $F_2$ the point $P_2(s)$ such that the turning 
$\tau(P_2(0)P_2(s))=\pi$. It is possible the possible cases: \\
1) such point $P_2(s)$ exists. If $\omega_2(s) >0$, then the inequality (\ref{turning}) is true; \\
2) Let us assume that $\omega_2(s)=0$. Then 
$$ 2\pi > \tau_2(s)+\omega_2\big(f(s)\big)=\pi \quad \text{and} \quad \tau_1\big(f(s)\big)=2\pi.$$
But $\tau_2\big(f(s)\big)= \pi$ and it follows that $f(s) < s_0$. The point $P_1(f(s))$ does not coincide
with  the point $P_1\big(f(s)\big)$, and the segment $P_1\big(f(s)\big)P_1(0)$ belongs to the curve $F_1$. 
If the point $P_1(0)$ is a corner point, then turning of the curve $F_1 >2\pi$. 
It is impossible for a closed convex curve. 
If the point $P_1(0)$ is a smooth point, then we have a contradiction with the choice of the point $P_1(0)$. \\
3) The turning of the arc $P_2(0)P_2(s)$ is greater than $\pi$ in the case when the point $P_2(s)$ is a corner point. The turning of the arc $P_2(s-\varepsilon)P_2(s)$ goes to $\pi - \alpha$ if $\varepsilon \to 0$, $\alpha$ is the angle at the point $P_2(s)$. 
Suppose that on the arc $P_2(s-\varepsilon)P_2(f(s-\varepsilon))$ , $f(s-\varepsilon) >s$ the following holds
$$\omega_1(s-\varepsilon) = \omega_2(s-\varepsilon) + \pi.$$
Then on the arc $P_1(s-\varepsilon)P_1(s)$ there exists a point with parallel support lines with the 
distance $\leq \varepsilon$. The curves $F_1$, $F_2$ are nongenerate and for $\varepsilon < c$, 
where $c >0$ is a constant, it is impossible- 
Then for $\varepsilon < c$ it is true strong inequalities
$$ 2\pi > \tau_2(s-\varepsilon) +  \omega_2(s-\varepsilon) =   \tau_2\big(f(s-\varepsilon)\big) > \pi,  $$
and we obtain contradiction as in the case 1).

\end{proof}

Lemmas \ref{Lem:1}, \ref{Lem:2}, \ref{Lem:3} yield the following theorem.

\begin{theorem}
\label{Thm:3}
Let $F_1, F_2$ be two simple closed convex curves on the plane oriented in the same way and of the same length. Then there exists an orientation-preserving isometry of the plane that maps $F_1, F_2$ to a pair of curves whose isometric combination is also a simple closed convex curve. \qed
\end{theorem}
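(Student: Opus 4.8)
The plan is to chain Lemmas~\ref{Lem:1}--\ref{Lem:3}. First I would fix arbitrary marked points $P_1\in F_1$ and $P_2\in F_2$ and measure arc length from them. Applying Lemma~\ref{Lem:3}, I obtain an isometry of the plane that repositions the curves so that, after relabelling the marked points as in the proof of that lemma, $P_1=P_2=:P$, the right semi-tangents of $F_1$ and $F_2$ at $P$ coincide, and for every $s$ the angle between the right semi-tangents at the corresponding points $P_1(s),P_2(s)$ is strictly less than $\pi$. This isometry can be taken orientation-preserving, because an orientation-preserving planar isometry is exactly what is determined by prescribing the image of a point together with the image of a unit direction there; with this choice $F_1$ and $F_2$ also end up on the same side of their common tangent line, and their lengths and orientations are unaffected.

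Next I would verify that this configuration satisfies the hypotheses of Lemma~\ref{Lem:2}. Let $\ell$ be the line through $P$ with direction the common right semi-tangent $\tau$. For a convex curve the line carrying a semi-tangent is a supporting line, so $\ell$ supports both bounded regions $\Omega_1$ and $\Omega_2$; and since $F_1$ and $F_2$ have the same orientation and the same tangent direction $\tau$ at $P$, both regions lie in the same closed half-plane bounded by $\ell$. To see that $\Omega_1\cap\Omega_2\neq\emptyset$, note that the tangent cone of $\Omega_i$ at $P$ is a convex cone contained in that half-plane and having $\tau$ as one bounding ray; it has nonempty interior because $\Omega_i$ does. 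Hence the interiors of the tangent cones of $\Omega_1$ and $\Omega_2$ both contain every direction obtained by tilting $\tau$ slightly into the half-plane, so they overlap, and therefore the regions meet near $P$. Lemma~\ref{Lem:2} now applies and shows that the isometric combination $F$, given by $r(s)=r_1(s)+r_2(s)$, is a closed convex curve.

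It remains to see that $F$ is \emph{simple}, that is, does not collapse to a doubly covered segment. The right semi-tangent of $F$ at $P(s)$ is the normalization of $r_1'(s+)+r_2'(s+)$, a sum of two unit vectors spanning an angle $<\pi$ by hypothesis and therefore nonzero; moreover the direction of this sum cannot stay confined to a fixed line, because $r_1'$ and $r_2'$ each rotate through a full turn as $s$ runs over $[0,s_0]$. Consequently the image of $F$ is not contained in a line, so $F$ bounds a convex region with nonempty interior and is a simple closed convex curve. The orientation-preserving isometry supplied by Lemma~\ref{Lem:3} is then the isometry required by the theorem.

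The steps I expect to carry the real content are the two reductions in the middle of the argument: recognising the common semi-tangent produced by Lemma~\ref{Lem:3} as a common supporting line lying on the same side of both curves, and proving that the two convex regions overlap (the tangent-cone step); together with the non-degeneracy argument certifying that the convex curve furnished by Lemma~\ref{Lem:2} is genuinely simple rather than a degenerate segment. Everything else is bookkeeping with orientations and arc-length parameters.
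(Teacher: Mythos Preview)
Your approach is exactly the paper's: it proves Theorem~\ref{Thm:3} simply by invoking Lemmas~\ref{Lem:1}--\ref{Lem:3} in sequence, with no further argument. Your plan is the same chaining, only with the hypothesis-verification steps (common supporting line, overlap of the bounded regions, non-degeneracy of $F$) spelled out rather than left implicit.
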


Theorem~\ref{Thm:3} gives

\begin{theorem}
\label{Thm:4}
Let $F_1, F_2$ be convex isometric cones in $E^3$. Then there exists an isometry of $E^3$ that maps those cones to the cones whose isometric combination is a strictly convex cone. \qed
\end{theorem}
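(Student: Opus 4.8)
The plan is to descend from the cones to their spherical links and then repeat, on the sphere, the scheme that produced Theorem~\ref{Thm:3}. After a translation we may assume $F_1$ and $F_2$ have a common vertex $O$; being isometric cones they have equal cone angle, say $\theta$, and we assume $\theta<2\pi$ (the only case of interest, and the one arising in applications, where $F_i$ is the tangent cone at a point of strict convexity). Then each solid cone, together with $O$, lies in an open half-space, and the links $\Gamma_i:=F_i\cap S$ ($S$ the unit sphere about $O$) are closed convex curves on $S$ of the common length $\theta$, each contained in an open hemisphere. The isometry carries generators to generators and preserves the distance to $O$, so, parametrizing the links by arc length $s$ from corresponding base points, the point of $F_i$ at distance $\rho$ on the generator $s$ is $\rho\,\Gamma_i(s)$; hence the isometric combination of $F_1$ and $F_2$ is exactly the cone with vertex $O$ over the closed spatial curve $\Gamma(s):=\Gamma_1(s)+\Gamma_2(s)$. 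Everything therefore reduces to choosing rotations of $\Gamma_1$ and $\Gamma_2$ about $O$ (i.e.\ isometries of $E^3$ fixing $O$ applied to each cone) for which the cone over $\Gamma$ is convex and pointed; here ``strictly convex cone'' is to be read as ``pointed cone'' (apex a point of strict convexity), which is all one can hope for, since the isometric combination of two polyhedral cones is again polyhedral.

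I would first settle the polyhedral case and then pass to the limit, as in Lemma~\ref{Lem:2}: inscribe in $\Gamma_1,\Gamma_2$ sequences of convex spherical geodesic polygons converging to them, so that the associated cones are convex polyhedral cones and the isometric combination is continuous under this limit. For polyhedral $F_1,F_2$ the argument of Lemma~\ref{Lem:1} transcribes to the sphere. One first invokes the spherical counterpart of Lemma~\ref{Lem:3} to bring the links into a position in which, along corresponding points, the right semi-tangents never point in opposite directions, the two solid cones have a common supporting plane meeting each of them along the same generator with both on the same side of it, and, in addition, both links lie in one common open hemisphere $H$. Then every vertex of the combination curve $\Gamma$ is a vertex of $\Gamma_1$ or of $\Gamma_2$, at which the angle of $\Gamma$ equals the mean $\tfrac12\beta^1+\tfrac12\beta^2$ of the two corresponding angles and so is strictly less than $\pi$; since moreover $\Gamma\subset H$ (because $\Gamma_1,\Gamma_2\subset H$ and $H$ is the intersection of $S$ with an open half-space through $O$, so that each straight edge of $\Gamma$ radially projects into $H$), the radial projection of $\Gamma$ onto $S$ is a simple closed convex spherical polygon, whence the cone over $\Gamma$ is a convex pointed polyhedral cone. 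Letting the inscribed polygons tend to $\Gamma_1,\Gamma_2$ then gives Theorem~\ref{Thm:4}.

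The crux is the spherical version of Lemma~\ref{Lem:3}: producing a relative rotation of $\Gamma_1,\Gamma_2$ that simultaneously (i) makes corresponding right semi-tangents never antipodal, (ii) aligns a common supporting plane on the same side so that the solid cones genuinely overlap away from $O$, and (iii) keeps both links inside a single open hemisphere --- the last being exactly what makes the combination cone pointed. On the plane every closed convex curve turns by $2\pi$; on the sphere the link of a cone of angle $\theta$ turns (integral of geodesic curvature) by $2\pi$ minus the area it bounds, and this total need not be the same for $\Gamma_1$ and $\Gamma_2$, so the monotonicity and continuity argument of Lemma~\ref{Lem:3} must be run with two different totals in place of $2\pi$, and one must check that imposing (iii) does not destroy (i) or (ii). Once this positioning lemma is in hand, the vertex-angle count and the polyhedral approximation are routine adaptations of Lemmas~\ref{Lem:1} and~\ref{Lem:2}.
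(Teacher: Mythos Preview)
Your route is genuinely different from the paper's, and the difference is instructive. The paper does \emph{not} redo Lemmas~\ref{Lem:1}--\ref{Lem:3} on the sphere. Instead, after intersecting the cones with $S^2$ to get the spherical links $M_1,M_2$, it applies the Pogorelov transformation
\[
\tilde r_i=\frac{r_i-\langle r_i,e_0\rangle e_0}{\langle e_0,r_1+r_2\rangle}
\]
to push $M_1,M_2$ down to a pair of \emph{planar} convex curves $\tilde M_1,\tilde M_2$ which, by Pogorelov's theorem, are again isometric. Theorem~\ref{Thm:3} (the planar result already in hand) then furnishes a rotation in the $(x_1,x_2)$-plane making $\tilde M=\tilde r_1+\tilde r_2$ convex; that planar rotation corresponds to a rotation of one cone about the $x_0$-axis. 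A direct computation then shows that the spherical curve coming back from $\tilde M$ is exactly $(r_1+r_2)/|r_1+r_2|$, i.e.\ the link of the isometric combination cone. So the paper gets convexity of the combination cone for free from the planar case, with no spherical analogue of Lemma~\ref{Lem:3} needed and no vertex-angle bookkeeping on the sphere. Dihedral angles are handled separately by a truncation-and-limit argument.

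Your plan, by contrast, requires two things you correctly flag but do not establish. First, the spherical Lemma~\ref{Lem:3}: since the total geodesic turning of $\Gamma_i$ equals $2\pi$ minus the enclosed spherical area, the two totals are in general different, and the contradiction in Lemma~\ref{Lem:3} hinged precisely on both totals being $2\pi$; it is not clear the monotonicity argument survives, especially under the additional constraint that both links stay in one open hemisphere. Second, the claim that at a vertex of $\Gamma=\Gamma_1+\Gamma_2$ the cone's dihedral angle is $\tfrac12(\beta^1+\beta^2)$: this was a Euclidean identity for plane curves, but here $\Gamma$ is a space curve off the sphere and the relevant angle is that of its \emph{radial projection}, which does not obviously obey the same half-sum rule. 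The Pogorelov transformation is exactly the device that converts these spherical difficulties back into the planar ones you already solved; without it, both steps are genuine gaps rather than routine adaptations.
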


\section{Convex isometric cones in $E^3$}

Let $K_1, K_2$ be convex isometric cones. We align those cones so that their vertices are at the origin, and such that they share a common strictly supporting plane. We further suppose that they are given by the position vectors
\[
R_1 = t \cdot r_1(s), \quad R_2 = t \cdot r_2(s), \quad \langle r_1, r_1 \rangle  = \langle r_2, r_2 \rangle = 1.
\]
The position vectors $r_1 = r_1(s)$, $r_2 = r_2(s)$ encode the curves $M_1, M_2$ at the intersection of the cones with the unit sphere $S^2$, $x_0^2+x_1^2+x_2^2=1$, where $x_0, x_1, x_2$ are the Cartesian coordinates in $E^3$ chosen so that the curves have $x_0 > 0$, and $s$ is the arc-length parameter measured along those curves starting at points $P_1$, respectively $P_2$, that correspond each other under the cone isometry. The curves $M_1, M_2$ are convex curves in $S^2$ of the same length.

Using the Pogorelov transformation, let us map the curves $M_1, M_2$ to the $(x_1, x_2)$-plane:
\[
\tilde r_1 = \frac{r_1 - \langle r_1, e_0 \rangle}{\langle e_0, r_1 + r_2 \rangle}, \quad \tilde r_2 = \frac{r_2 - \langle r_2, e_0 \rangle}{\langle e_0, r_1 + r_2 \rangle},
\]
where $e_0$ is the unit vector in the direction of $x_0$ axes, $\tilde r_1, \tilde r_2$ are the orthogonal projections of the vectors $r_1, r_2$ onto the $(x_1, x_2)$-plane. By a theorem of Pogorelov \cite[Ch.5, \S 3]{Pog}, the curves $\tilde M_1, \tilde M_2$ given by the position vectors $\tilde r_1, \tilde r_2$ are convex and isometric. Therefore, by Theorem~\ref{Thm:3}, the curves $\tilde M_1, \tilde M_2$ can be rotated in the $(x_1, x_2)$-plane so that their isometric combination is a closed convex curve $\tilde M$.

Observe that if we rotate the curve $\tilde M_1$, then it corresponds to the rotation of the cone $K_1$ and the corresponding convex curve $M_1$ around the $x_0$-axis. Therefore, without loss of generality, we can assume that the isometric combination of curves $\tilde M_1$ and $\tilde M_2$ is a convex curve $\tilde M$ given by the position vector $\tilde r = \tilde r_1 + \tilde r_2$. This curve corresponds to a convex curve $M_0 \subset S^2$ with the position vector
\[
r = \frac{(\tilde r_1 + \tilde r_2, 1)}{\sqrt{1 + \langle \tilde r_1 + \tilde r_2, \tilde r_1 + \tilde r_2 \rangle}}.
\]
We can express $\tilde r_1, \tilde r_2$ in another way as follows:
\[
\tilde r_1 = \frac{\bar r_1}{x_1^0 + x_2^0}, \quad \tilde r_2 = \frac{\bar r_2}{x_1^0 + x_2^0} ,
\]
where $\bar r_1, \bar r_2$ are the orthogonal projections of the position vectors $r_1, r_2$ onto the $(x_1, x_2)$-plane, and $x_1^0, x_2^0$ are the $x_0$-coordinates of $r_1$, respectively, $r_2$.

By direct computation, it follows that
\[
r = \frac{\left(\frac{\bar r_1 + \bar r_2}{x_1^0 + x_2^0}, 1\right)}{\sqrt{1 + \frac{\langle \bar r_1 + \bar r_2, \bar r_1 + \bar r_2 \rangle}{(x_1^0 + x_2^0)^2}}} = \frac{\left({\bar r_1 + \bar r_2}, {x_1^0 + x_2^0}\right)}{\sqrt{\left(x_1^0+x_2^0\right)^2 + \langle \bar r_1 + \bar r_2, \bar r_1 + \bar r_2 \rangle}} = \frac{(\bar r_1 + \bar r_2, x_1^0 + x_2^0)}{\sqrt{2(1 + \langle r_1, r_2 \rangle)}}.
\]

The cones $K_1, K_2$ are given by the position vectors
\[
R_1 = t \cdot r_1, \quad R_2 = t \cdot r_2.
\]
Their isometric combination is a cone $K$ with the position vector $R = t \cdot (r_1 + r_2)$. The curve $M$ of the intersection of this cone with the unit sphere is given by
\[
r = \frac{r_1 + r_2}{\sqrt{\langle r_1 + r_2, r_1 + r_2\rangle}} = \frac{(\bar r_1 + \bar r_2, x_1^0 + x_2^0)}{\sqrt{2(1 + \langle r_1, r_2 \rangle)}}.
\]
As we can see, the curves $M$ and $M_0$ coincide. Hence, the isometric combination of the cones $K_1$ and $K_2$ will be the convex cone $K$.

\section{Isometric combinations of dihedral angles}

If for a pair $D_1, D_2$ of dihedral angles their edges correspond under the isometry, them the isometric combination of $D_1$ and $D_2$ will be a dihedral angle $D$. If we put one dihedral angle inside the other so that their intersection with the unit sphere looks as shown in Figure~\ref{Fig:1} (left), then the angle between the corresponding under the isometry rays of dihedral angles is $< \pi - \epsilon_0$, $0 < \epsilon_0 < \pi$. 

If the edges of $D_1, D_2$ do not correspond to each other under the isometry, we can still locate the angles so that their edges coincide and that their intersection with the unit sphere has the form as shown in Figure~\ref{Fig:1}, left. In this case, however, the vertices of the digons do not correspond each other under the isometry. Nonetheless, in this case the angle between the corresponding under the isometry directions $< \pi - \epsilon_0$, $0 < \epsilon_0 < \pi$.

\begin{figure}[h]
\begin{center}
\includegraphics[width=0.5\textwidth]{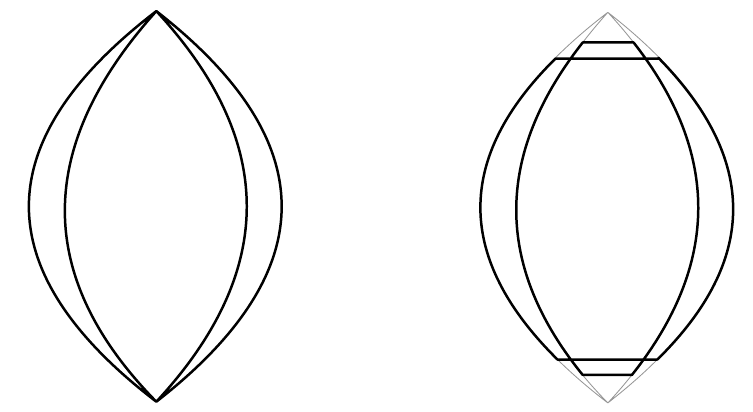}
\caption{}
\label{Fig:1}
\end{center}
\end{figure}

Let us substitute the digons with the sequence of the quadrilaterals $D_1^n, D_2^n$ of the same length (see Figure~\ref{Fig:1}, right). Their isometric combination will be a convex curve $D^n$. The limit of $D^n$'s in the sphere will be a convex curve $D$ which is the intersection with the sphere of the isometric combination of $D_1$ and $D_2$. 

We can write them as a Cartesian product of a dihedral angle in $E^3$ times $E^{n-2}$:
\[
K_1(P_1) = D_1 \times E_1^{n-2}, \quad K_1(P_2) = D_2 \times E_2^{n-2},
\]     
where $E_1^{n-2}$, $E_2^{n-2}$ coincide and correspond to each other under the isometry. The isometric combination of such cones will be the Cartesian product of the isometric combination of $D_1$ and $D_2$ times $E^{n-2}$. Let us locate the dihedral angles so that they are nested. Then on the sphere they cut out digons that are also nested. The following cases are possible:

a) the edges of the dihedral angles correspond each other under the isometry. Then their isometric combination will be a dihedral angle.

b) the edges do not correspond under the isometry. In this case, let us substitute the digons in $S^2$ with the quadrilaterals $D_1^n, D_2^n$ that are obtained by truncating the vertices of the digons so that the lengths of $D_1^n$ and $D_2^n$ are equal. By Theorem~\ref{Thm:3}, the isometric combination of polyhedral angles that correspond to these quadrilaterals will be a convex polyhedral angle $D^n$. Moreover, for the polyhedral angles $D_1, D_2$, the angle between the isometric directions $< \pi - \epsilon_0$, $\epsilon_0 > 0$. The limit of $D^n$'s will be a convex polyhedral angle $D$ that is a isometric combination of dihedral angles in $E^3$.

In this way, Theorem~\ref{Thm:4} is proven.

\section{Proof of Theorems~\ref{Thm:Prime1} and \ref{Thm:Prime2}}

Let us place the tangent cones at the points $P_1 \in F_1$, $P_2 \in F_2$ so that their isometric combination is a convex cone. This can be done indeed:

1) Suppose that the tangent cone at $P_1 \in F_1$ has a form $K(P_1) = V_1^s \times E_1^{n-s}$, $s \ge 3$. Then, as was proven in \cite{Bor}, the cone $K(P_2)$ is congruent to $K(P_1)$, i.e., there is an isometry of $E^3$ that sends one cone to the other. Hence, their isometric combination will be a cone that is homothetic to $K(P_1)$. 

2) $K(P_1) = V_1^2 \times E_1^{n-2}$. Then $K(P_2) = V_2^2 \times E_2^{n-2}$, where $E_1^{n-2}, E_2^{n-2}$ are isometric planes. We can align those plane by a proper motion, and the isometric cones $V_1^2, V_2^2$ will be in the common three dimensional space $E^3$. From Theorem~\ref{Thm:3} and the Pogorelov transformation it follows that there exists a motion in $E^3$ that moves the cones in the position so that their isometric combination is a convex cone.

3) $K(P_1) = V_1^1 \times E_1^{n-1}$ is a dihedral angle in $E^{n+1}$ (here, $V_1^1$ is a cone in $E^2$), $K(P_2) = V_2^1 \times E_2^{n-1}$. We can represent them as a Cartesian product 
\[
K(P_1) = D_1 \times E_1^{n-2}, \quad K(P_2) = D_2 \times E_2^{n-2},
\]
where $E_1^{n-2}, E_2^{n-2}$ are the Euclidean subspaces that correspond each other under the isometry, and $D_1, D_2$ are the dihedral angles in $E^3$. They can be located so that their isometric combination is a convex cone.

4) $K(P_1) = E_1^n$, $K(P_2) = E_2^n$.

5) $K(P_1) = V_1^1 \times E_1^{n-1}$, $K(P_2) = E^n$. In those cases, their isometric combination will be 4) hyperplane, 5) dihedral angle.

Consider the cases:

1). Let $P_1^n \in F_1$, $P_2^n \in F_2$ be a sequence of corresponding under the isometry points that tend to $P_1 = P_2 = P_0$. The cones $K(P_1^n)$, $K(P_2^n)$ tend to the isometric cones $K_1^0$, $K_2^0$. Since the cones $K(P_1)$ and $K(P_2)$ coincide, then the convex cones $K_1^0$ and $K_2^0$ coincide and their isometric combination is a convex cone. 

Let us show that for sufficiently large $n$, the isometric combination of the cones $K(P_1^n)$, $K(P_2^n)$ is a convex cone.

a) Suppose $K(P_1^n) = V_1^s \times E_1^{n-s}$, $s \ge 3$. Then the cones $K(P_1^n)$ and $K(P_2^n)$ are congruent and their isometric combination is a convex cone for sufficiently large $n$.

b) Suppose $K(P_1^n) = V_1^2 \times E_1^{n-2}$. Then $K(P_2^n) = V_2^2 \times E_2^{n-2}$, and the cones $K_1^0, K_2^0$ are either coinciding $V^2 \times E^{n-2}$, or $V^1 \times E^{n-1}$, or $E^n$. Suppose that on the cone $\gamma_n$, which is a isometric combination of cones $K(P_1^n)$, $K(P_2^n)$, there exists a generator $t_n$ at which the local convexity is violated. Without loss of generality, we can assume that the corresponding generators $t_n^1$ on $K(P_1^n)$ converge to $t_0$ on $K_1^0$, where $t_0 \in V_1^s$. 

Let $A_n'$ be a point on $t_n^1$ at distance $1$ from the subspace $E_1^{n-2}$, $\kappa_n'$ be a tangent dihedral angle at this point, and $\kappa_n''$ be a tangent dihedral angle at the point $A_n'' \in K(P_2^n)$ that corresponds to $A_n'$ under the isometry. Under the isometric combination of the angles $\kappa_n', \kappa_n''$ for sufficiently large $n$, we obtain a dihedral angle $\kappa_n$ that contains a ball $\omega$. The existence of the ball $\omega$ follows from the fact that $\kappa_1^0$ and $\kappa_2^0$ coincide and the limits of $K(P_1^n), K(P_2^n)$ are $K_1^0, K_2^0$. Passing through the edge of this dihedral angle we can draw a supporting plane so that the inner normal $\nu$ to this plane, if placed starting at $A_n'$, $A_n''$, directs inside the cones $K(P_1^n)$, $K(P_2^n)$. Let us connect a point $B_n'$, that is close to $A_n'$, with the geodesic $\gamma'$ on $K(P_1^n)$. Let $r_1(s)$ be the position vector of a point on the curve $\gamma'$ corresponding to the arc-length $s$ (measured from the point $A_n'$), and $r_2(s)$ be the position vector of a point on $K(P_2^n)$ corresponding under the isometry. For $s=0$, we have
\[
\frac{d}{ds}\langle r_1 + r_2, \nu \rangle \ge 0.
\]  
By a theorem of Liberman about the convexity of geodesics $\gamma'$ on $K(P_1^n)$ and the corresponding geodesic $\gamma''$ on $K(P_2^n)$, we conclude that
\[
\frac{d}{ds} \langle r_1 + r_2, \nu \rangle \ge 0 \quad \text{ for all }s \text{ along }\gamma'.
\]

By this inequality, all points of the cone $\gamma_n$ that are close to the image of the point $A_n'$ are located on one side with respect to the supporting hyperplane with the inner normal $\nu$. This yields the convexity of the cone. Here, for completeness, we have outlined the argument due to Pogorelov about the convexity of the isometric combination of isometric cones in $E^3$ \cite{Pog}. 

2) $K(P_1) = V_1^2 \times E_1^{n-2}$, $K(P_2) = V_2^2 \times E_2^{n-2}$. We can align the vertices $P_1 = P_2 = P_0$ and place $F_1$ and $F_2$ so that the isometric combination of the cones $K(P_1)$, $K(P_2)$ is a convex cone. Let $P_1^n \in F_1$, $P_2^n \in F_2$ be a sequence of points that correspond each other under the isometry and $P_1^n \to P_0$, $P_2^n \to P_0$ as $n \to \infty$.
Further, if $K(P_1^n)$, $K_2(P_2^n)$ are the tangent cones, $\lim_{n\to \infty} K(P_1^n) = K_1^0$, $\lim_{n\to \infty} K(P_2^n) = K_2^0$.

a) Let $K_1^0 = V_1^2 \times E^{n-2}$. Then $K_2^0 = V_2^2 \times E^{n-2}$. Then $K(P_1) = K_1^0$, $K(P_2) = K_2^0$, and the isometric combination of $K_1^0, K_2^0$ is a convex cone. Furthermore, there exists a ball $\omega$ that lies inside the cones $K_1^0, K_2^0$. The rest of the proof goes as in item 1) above.

b) $K_1^0$, $K_2^0$ are dihedral angles. Since those dihedral angles are supporting to the cones $K(P_1)$, $K(P_2)$, and their isometric combination is a convex cone, then there exists a ball $\omega$ inside these dihedral angles. The rest of the proof follows the proof in item 1) above.

Items 3)--5) are proven analogously.  

In this way, we showed that in a neighborhood of the point $P_0 = P_1 = P_2$ the hypersurfaces $F_1$ and $F_2$ can be moved by a rigid motion in such a way so that their isometric combination in a neighborhood of $P_0$ be a convex hypersurface. The rest of the proofs of Theorems~\ref{Thm:Prime1}, \ref{Thm:Prime2} follows the proof of Theorems \ref{Thm:Sen2a}, \ref{Thm:Sen2b} due to Senkin \cite{Sen, Sen2}. We briefly outline it here.

\textit{Proof of Theorem~\ref{Thm:Prime1}}. Let $F_1$ and $F_2$ be convex isometric hypersurfaces, $P_1 \in F_1$ be a point of strict convexity of the hypersurface $F_1$, and $P_2 \in F_2$ be the corresponding under the isometry point. By Theorem~\ref{Thm:3}, using the isometry of the ambient space, we can align the points $P_1$ and $P_2$ so that their isometric combination given by the position vector $r = (r_1 + r_2)/2$ is a hypersurface $\Phi$ convex in a neighborhood of the point $P_0 = P_1 = P_2$, while the field $\tau = r_1 - r_2$ is an infinitesimal bending field of $\Phi$.  

It follows that almost everywhere 
\[
\langle dr, d\tau \rangle = 0.
\] 

Let us move the supporting hyperplane at $P_0$ by parallel translations so that it cut away a cap $\omega$ of the hypersurface $\Phi$. Senkin \cite{Sen} showed that the cap $\omega$ is rigid away from the hyperflat parts, i.e., the field $\tau$ is trivial away from such hyperflat regions.

Such hyperflat regions on $\Phi$ can happen only as isometric combinations of hyperflat regions on $F_1$ and $F_2$. Therefore, since the regions on $F_1$, $F_2$ that correspond to $\Phi$, coincide away from the flat regions, then they must coincide.

\textit{Proof of Theorem~\ref{Thm:Prime2}} coincides with the proof of Theorem~\ref{Thm:Sen2b} due to Senkin \cite{Sen2}, \cite{Sen3} after we established Theorem~\ref{Thm:4}.

As a corollary to Theorem~\ref{Thm:Prime2}, we obtain the following result: \textit{Let $F_1, F_2$ be isometric convex hypersurfaces. If at a point $P_1 \in F_1$ the tangent cone has a form $K(P_1) = V_1^s \times E^{n-s}$, $s \ge 3$, then there exists a neighborhood of the point $P_1$ that is congruent to the corresponding isometric neighborhood on the hypersurface $F_2$.}

A.~D.~Alexandrov proved that at almost every point of a convex hypersurface there exists a second fundamental form. If the rank of the second fundamental form at such a point is $\ge 3$, then at this point there exists a neighborhood on the hypersurface that is rigid up to proper isometry.

Theorems~\ref{Thm:Prime1}, \ref{Thm:Prime2} are true in the spherical and the hyperbolic spaces. To see that, we use the Pogorelov transform that sends the isometric convex hypersurfaces in those spaces to the isometric convex hypersurfaces in $E^{n+1}$ satisfying the assumptions of Euclidean Theorems~\ref{Thm:Prime1}, \ref{Thm:Prime2}. Now using the inverse transformation, we obtain the congruent hypersurfaces in spaces of constant curvature.  

%%%%%%%%%%%%%%%%%%%%%%%%%%%%%%%%%%%%%%%%%%%%%%%%%%%%%%%%%%%%%%

%For the acknowledgement and support section, please don't number the section, please begin it with \section*{Acknowledgements.}

\subsection*{Acknowledgments.} 
The author was partially supported by the Simons Foundation Award 507536. He is grateful to ICERM and the Mathematical Department of Brown University for hospitality during his stay. The author also would like to thank Kostiantyn Drach and Darya Sukhorebska  for the  help in the preparation of the manuscript and several useful remarks.

%%%%%%%%%%%%%%%%%%%%%%%%%%%%%%%%%%%%%%%%%%%%%%%%%%%%%%%%%%%%%%%

% References


\begin{thebibliography}{99}

\bibitem[Bor]{Bor}
A.A. Borisenko, \textit{Rigidity of convex hypersurfaces in multidimensional spaces of constant curvature}. arXiv (2024), \emph{submitted}. https://doi.org/10.48550/arXiv.2406.15761

\bibitem[Pog]{Pog}
A.V. Pogorelov, \textit{Extrinsic Geometry of Convex Surfaces}. Translations of Mathematical Monographs, Vol. 35, AMS, 1973.
    
\bibitem[CV]{CV}
S. Cohn-Vossen, \textit{Zwei S\"atze \"uber die Starheit der Eifl\"achen}. G\"ottinger Nachrichten (1927) 125-134.

\bibitem[Her]{Her}
G. Herglotz, \textit{\"Uber die Starheit der Eifl\"achen}. Abh. math. Semin. Hansische Univ. \textbf{15} (1943) 127-129.

\bibitem[Mil]{Mil}
A.D.~Milka, \textit{Unique determinacy of general closed convex surfaces in Lobacevskii space}. Ukr. Geom. Sb. \textbf{28} (1980) 99--107.

\bibitem[Sen1]{Sen1}
E.P. Senkin, \textit{Uniqueness of compact convex hypersurfaces} (in Russian). Ukr. Geom. Sb. \textbf{12} (1972) 131-152.

\bibitem[Sen2]{Sen2}
E.P. Senkin, \textit{Addition to the article ``Uniqueness of compact convex hypersurfaces''}. Ukr. Geom. Sb. \textbf{17} (1975) 132-134.

\bibitem[Sen3]{Sen3}
E.P. Senkin, \textit{Bending of convex surfaces}. Problems of Geometry, Results of Sciences and Technology (VINITI) \textbf{10} (1978) 193-222.


\end{thebibliography}
\end{document}